\documentclass[twocolumn,aps,pra,floatfix,amsmath,amssymb,longbibliography]{revtex4-2}

\usepackage[T1]{fontenc}
\usepackage{amsthm}
\usepackage{booktabs}
\usepackage{xcolor}
\usepackage{tikz}
\usetikzlibrary{calc}
\usepackage[
  breaklinks=true,
  colorlinks=true,
  citecolor=blue,
  linkcolor=blue,
  urlcolor=blue
]{hyperref}
\usepackage{orcidlink}

\newtheorem{theorem}{Theorem}
\newtheorem{lemma}[theorem]{Lemma}

\newtheorem{corollary}[theorem]{Corollary}
\theoremstyle{definition}
\newtheorem{definition}[theorem]{Definition}

\theoremstyle{remark}

\newcommand{\ii}{\mathrm{i}}
\newcommand{\C}{\mathbb C}
\newcommand{\R}{\mathbb R}
\newcommand{\Htwenty}{\mathsf H_{20}}
\newcommand{\Htwentyone}{\mathsf H_{21}}

\definecolor{ctx1}{RGB}{230,25,75}
\definecolor{ctx2}{RGB}{60,180,75}
\definecolor{ctx3}{RGB}{0,130,200}
\definecolor{ctx4}{RGB}{245,130,48}
\definecolor{ctx5}{RGB}{145,30,180}
\definecolor{ctx6}{RGB}{70,240,240}
\definecolor{ctx7}{RGB}{240,50,230}
\definecolor{ctx8}{RGB}{210,245,60}
\definecolor{ctx9}{RGB}{250,190,190}
\definecolor{ctx10}{RGB}{0,128,128}
\definecolor{ctx11}{RGB}{128,128,0}
\definecolor{ctx12}{RGB}{170,110,40}
\definecolor{ctx13}{RGB}{128,0,0}
\definecolor{ctx14}{RGB}{0,0,128}
\tikzset{ctxline/.style={line width=0.95pt,line cap=round}}

\newcommand{\twodotlabel}[5]{%
  \node[circle,fill=#1,draw=#1,minimum size=7.2pt,inner sep=0pt,
        label={[font=\scriptsize,label distance=0.7mm]#3:{#4}}] at #5 {};
  \node[circle,fill=#2,draw=#2,minimum size=4.2pt,inner sep=0pt] at #5 {};
}

\begin{document}

\title{Complex coordinates forced by finite orthogonality hypergraphs in dimension three}

\author{Mirko Navara\,\orcidlink{0000-0002-0880-5992}}
\affiliation{Faculty of Electrical Engineering, Czech Technical University in Prague,
Technick\'{a} 2, CZ-166~27 Prague 6, Czech Republic}
\email{navara@fel.cvut.cz}

\author{Karl Svozil\,\orcidlink{0000-0001-6554-2802}}
\affiliation{Institute for Theoretical Physics, TU Wien,
Wiedner Hauptstra{\ss}e 8-10/136, A-1040 Vienna, Austria}
\email{karl.svozil@tuwien.ac.at}

\date{\today}

\begin{abstract}
We exhibit two finite orthogonality hypergraphs that admit faithful orthogonal representations by rays in complex three-dimensional space but admit none in real three-dimensional space. Thus, although each orthogonality diagram is unchanged, its coordinatizability in dimension three depends on whether the scalar field is complex or real. The first hypergraph has 20 vertices and 12 contexts. The second hypergraph adds one vertex and two contexts. Phase-adjusted realification also gives explicit faithful orthogonal representations of both hypergraphs in real six-dimensional space.
\end{abstract}

\maketitle

\section{Introduction}

An orthogonality hypergraph records which finite families of rays are required
to form orthogonal bases.  Its representability may depend on the scalar field,
even when the ambient dimension is fixed.  Harding and Salinas Schmeis gave the
first finite hypergraph with a faithful orthogonal representation over $\C^3$
but none over $\R^3$~\cite{harding2025remarksIJTP}.  Independently, Trandafir
and Cabello encountered the same real-complex separation in a
Kochen--Specker (KS) construction designed for bipartite perfect quantum
strategies~\cite{2024-cabello-trandafir0}.  A subsequent analysis by Navara and
Svozil relates and extends those constructions through mutually unbiased bases
and KS closures~\cite{svozil-2025-MYOCHS}.

The role of complex scalars in quantum theory involves two questions that
should not be conflated.  If a change of dimension and an additional complex
structure are allowed, a complex Hilbert space $\C^n$ can be represented on
the real space $\R^{2n}$ by separating real and imaginary parts; related
constructions reproduce broad classes of complex quantum protocols within a
real formalism~\cite{Hardy2012LimitedHolism,Aleksandrova2013UniversalQubit,
McKague2009SimulatingRealHilbert}.  If instead one keeps fixed the dimension,
composition rules, locality assumptions, or the interpretation of a context
as a maximal orthogonal family, real and complex models can be separated
operationally~\cite{renou-2021,Weilenmann2025PartialIndependencePRL,
Bednorz2022OptimalDiscrimination}.  A statement that a hypergraph is
representable in $\C^3$ but not in $\R^3$ belongs to the latter,
fixed-dimension comparison.

The phase-adjusted realification method of Khrennikov and
Svozil~\cite{svozil-2025-rvc} makes the boundary precise for finite
orthogonality data.  It preserves and reflects all pairwise orthogonalities in
$\R^6$, but a three-ray context then spans only half of the ambient real
space.  Below we apply this construction explicitly to both octagons after
proving their same-dimension separation.

We present two further exact finite examples, both with an octagonal incidence
skeleton.  The 20-vertex hypergraph $\Htwenty$ forces an equality of
two sums of rank-one projectors.  Complex phases permit two distinct
nonorthogonal two-ray decompositions of the same positive operator; over the
reals, the spectral decomposition fixes the rays.  The 21-vertex hypergraph
$\Htwentyone$ is obtained by completing both inner pairs with a common ray.
The spectral argument then becomes degenerate, but the perimeter closure
relations supply a second obstruction to real coordinates.

Unlike a conventional Kochen--Specker set, neither hypergraph is used through
an uncolorability argument.  Its role here is instead to enforce exact
orthogonality incidences and the resulting projector or closure identities;
no contradiction involving two-valued states is assumed or needed.

The results established below go beyond merely exhibiting convenient complex
coordinates.  For each displayed representation we verify all unordered ray
pairs exactly: the prescribed pairs are orthogonal, every other inner product
is nonzero, and distinct vertices label distinct rays.  Thus the word
``faithful'' carries its full graph-theoretic meaning~\cite{lovasz-79,Portillo-2015}.

\section{Orthogonality representations and two algebraic lemmas}

Throughout, we use the physics convention for the Hermitian inner product,
which is conjugate-linear in the first argument:
\begin{equation}
 \langle x,y\rangle=\sum_{k=1}^{d}\overline{x_k}\,y_k.
 \label{eq:inner-product}
\end{equation}

\begin{definition}
Let $\mathsf H=(V,\mathcal E)$ be a three-uniform hypergraph, let
$\mathbb F$ be either $\R$ or $\C$, and let $d\ge3$.  A \emph{faithful
orthogonal representation} of $\mathsf H$ in $\mathbb F^d$ assigns a nonzero
vector $x_v\in\mathbb F^d$ to every $v\in V$ such that:
\begin{enumerate}
\item distinct vertices determine distinct rays; and
\item $\langle x_u,x_v\rangle=0$ if and only if $u$ and $v$ belong to a
common hyperedge.
\end{enumerate}
Each hyperedge is then an orthogonal triple.  For $d=3$ it is an orthogonal
basis and hence a maximal \emph{context}; for $d>3$ the same specified triple
need not be maximal.
\end{definition}

For a nonzero column vector $x$, let $x^\dagger$ denote its conjugate
transpose and write
\begin{equation}
 P_x=\frac{xx^\dagger}{\langle x,x\rangle}
 \label{eq:projector}
\end{equation}
for the rank-one projector onto its ray.  In a three-dimensional
representation, a context $\{x,y,z\}$ satisfies
\begin{equation}
 P_x+P_y+P_z=I_3.
 \label{eq:context-sum}
\end{equation}
Consequently, integer linear combinations of three-dimensional context
equations can force projector identities that hold in every representation
of the hypergraph in that dimension.

For $x,y\in\C^3$ define the Hermitian cross product by
\begin{equation}
 x\boxtimes y:=\overline{x\times y},
 \label{eq:hcross}
\end{equation}
where $x\times y$ is the ordinary bilinear cross product.  Then
$x\boxtimes y$ is Hermitian-orthogonal to both $x$ and $y$.

\begin{lemma}[Cyclic Gram closure]
\label{lem:gram-closure}
Let $x,y,z\in\C^3$, and suppose the two cross products in
Eq.~\eqref{eq:hcross} are nonzero.  Then
\begin{equation}
 \langle x\boxtimes y,y\boxtimes z\rangle=0
 \quad\Longleftrightarrow\quad
 \langle x,y\rangle\langle y,z\rangle
 =\langle x,z\rangle\langle y,y\rangle.
 \label{eq:gram-closure}
\end{equation}
\end{lemma}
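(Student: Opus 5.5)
Compute the inner product $\langle x\boxtimes y,y\boxtimes z\rangle$ explicitly in terms of Hermitian inner products of $x,y,z$, using the Binet--Cauchy (Lagrange) identity for the bilinear cross product. Then read off the equivalence.

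\emph{First step.} Unwind the definition~\eqref{eq:hcross}. Since $x\boxtimes y=\overline{x\times y}$, the inner product~\eqref{eq:inner-product} gives
\begin{equation*}
\langle x\boxtimes y,y\boxtimes z\rangle=\sum_k (x\times y)_k\,\overline{(y\times z)_k}=(x\times y)\cdot(\bar y\times\bar z),
\end{equation*}
where $a\cdot b=\sum_k a_kb_k$ is the bilinear dot product. The Binet--Cauchy identity $(a\times b)\cdot(c\times d)=(a\cdot c)(b\cdot d)-(a\cdot d)(b\cdot c)$ is a polynomial identity, so it holds over $\C$. Applying it yields
\begin{equation*}
(x\cdot\bar y)(y\cdot\bar z)-(x\cdot\bar z)(y\cdot\bar y).
\end{equation*}

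\emph{Second step.} Translate back to Hermitian products using $a\cdot\bar b=\langle b,a\rangle=\overline{\langle a,b\rangle}$. The expression becomes
\begin{equation*}
\overline{\langle x,y\rangle\langle y,z\rangle-\langle x,z\rangle\langle y,y\rangle}.
\end{equation*}
This vanishes exactly when the conjugated quantity vanishes, which is~\eqref{eq:gram-closure}.

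\emph{Main obstacle.} The only delicate point is bookkeeping: keeping track of which argument carries the conjugation, so that the order of factors in each Hermitian product comes out as stated rather than reversed. Because the whole expression is conjugated at the end, any reversal would show up as the conjugate identity. That identity is equivalent for vanishing anyway, but I will check the signs carefully.

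The nonzero hypothesis on the cross products is not used in the algebra. It only ensures that the two vectors $x\boxtimes y$ and $y\boxtimes z$ define rays, which is what matters in the later applications. I will remark on this explicitly.
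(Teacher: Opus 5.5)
Your proposal is correct and is essentially the paper's proof. Like the paper, you identify $\langle x\boxtimes y,y\boxtimes z\rangle$ with the bilinear product $(x\times y)\cdot(\bar y\times\bar z)$, apply the bilinear Lagrange (Binet--Cauchy) identity, and recognize the result as the complex conjugate of $\langle x,y\rangle\langle y,z\rangle-\langle x,z\rangle\langle y,y\rangle$; your remark that the nonvanishing hypothesis on the cross products plays no role in the algebra is also accurate, since the paper's argument does not use it either.
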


\begin{proof}
The bilinear Lagrange identity, with $\mathbin{\cdot}$ denoting the standard
bilinear dot product on $\C^3$, gives
\[
 (x\times y)\mathbin{\cdot}(\bar y\times\bar z)
 =(x\mathbin{\cdot}\bar y)(y\mathbin{\cdot}\bar z)
 -(x\mathbin{\cdot}\bar z)(y\mathbin{\cdot}\bar y).
\]
Indeed, for arbitrary $a,b\in\C^3$,
$\langle\bar a,\bar b\rangle=a\mathbin{\cdot}\bar b$; taking
$a=x\times y$ and $b=y\times z$ therefore identifies the left-hand side
with $\langle x\boxtimes y,y\boxtimes z\rangle$.  The right-hand side
is the complex conjugate of
\[
 \langle x,y\rangle\langle y,z\rangle
 -\langle x,z\rangle\langle y,y\rangle.
\]
Their vanishing is therefore equivalent.
\end{proof}

\begin{lemma}[Rigidity of two real rays]
\label{lem:real-rigidity}
Let $a_1,a_2,b_1,b_2$ be real unit vectors satisfying
\begin{equation}
 P_{a_1}+P_{a_2}=P_{b_1}+P_{b_2}.
 \label{eq:two-sums}
\end{equation}
If $a_1$ and $a_2$ are distinct and nonorthogonal, then
$\{[b_1],[b_2]\}=\{[a_1],[a_2]\}$ as unordered pairs of rays.
\end{lemma}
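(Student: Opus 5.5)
The plan is to work with the positive semidefinite operator $S:=P_{a_1}+P_{a_2}=P_{b_1}+P_{b_2}$, a real symmetric matrix, and extract the rays from its spectral data. Because $a_1,a_2$ are distinct rays, they are linearly independent, so $S$ has rank two with range $W=\mathrm{span}\{a_1,a_2\}$. Since $S$ is also the sum of two rank-one projectors onto $b_1,b_2$, its range is contained in $\mathrm{span}\{b_1,b_2\}$. Comparing ranks gives $\mathrm{span}\{b_1,b_2\}=W$, and in particular $b_1,b_2\in W$. This reduces everything to the real plane $W\cong\R^2$.

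Inside $W$, I write $c=\langle a_1,a_2\rangle$; by hypothesis $c\neq 0$, and $|c|<1$ because the rays are distinct. I compute the trace and determinant of $S|_W$. The trace is $2$. The determinant equals the Gram determinant contribution, namely $\det S|_W=1-c^2$. This follows from the eigenvalues $1\pm|c|$, obtained by testing $S$ on $a_1\pm a_2$. The same computation applied to the $b$'s yields eigenvalues $1\pm|d|$ with $d=\langle b_1,b_2\rangle$, so $|d|=|c|$.

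The key real-specific step uses the structure of the planar configuration. For two unit vectors in $\R^2$ at angles $\theta_1,\theta_2$, the operator $P_{a_1}+P_{a_2}$ equals $I+\tfrac12(R(2\theta_1)+R(2\theta_2))$, where $R(\phi)$ is the traceless reflection matrix $\begin{pmatrix}\cos\phi&\sin\phi\\ \sin\phi&-\cos\phi\end{pmatrix}$. The equality therefore becomes $e^{2\ii\theta_1}+e^{2\ii\theta_2}=e^{2\ii\varphi_1}+e^{2\ii\varphi_2}$ for unit complex numbers. When this common sum is nonzero, the unordered pair $\{e^{2\ii\theta_1},e^{2\ii\theta_2}\}$ is determined by it: the two points are the intersections of the unit circle with the perpendicular bisector line through half the sum. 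Equivalently, the sum $s$ and the product (recovered as $s/\bar s$) fix the pair as the roots of a quadratic. The doubled angles then fix the rays, since a ray in $\R^2$ corresponds exactly to an angle modulo $\pi$.

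The case needing care, and the main obstacle, is a vanishing sum. A zero sum means $e^{2\ii\theta_1}=-e^{2\ii\theta_2}$, that is, $a_1\perp a_2$. This is exactly the excluded orthogonal case, in which $S=I_W$ and any orthonormal pair works. I will therefore make sure the nonorthogonality hypothesis is invoked at precisely that step.

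Alternatively, the argument can be phrased spectrally. The $\pm$ eigenvectors $e_\pm$ of $S|_W$ are distinct, and the rays $a_i$ are the two bisector-reflected directions at fixed angle $\alpha$ from $e_+$, with $\cos 2\alpha=|c|$. Over $\R^2$ there are exactly two unit directions, up to sign, at angle $\alpha$ from $e_+$, so the same two rays are forced for the $b$'s.
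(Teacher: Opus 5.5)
Your proof is correct, but its main step follows a different route from the paper's. The paper's argument is spectral. It normalizes signs so that $\gamma=\langle a_1,a_2\rangle\in(0,1)$ and $\delta=\langle b_1,b_2\rangle\ge0$, and equality of spectra gives $\delta=\gamma$. Because the eigenvalues $1\pm\gamma$ are simple, it can write $b_1+b_2=\lambda(a_1+a_2)$ and $b_1-b_2=\mu(a_1-a_2)$, and the norm and inner-product equations then force $\lambda^2=\mu^2=1$. Your closing ``alternative'' paragraph is essentially this argument in angular language.

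Your main route instead reduces to the plane $W$ and uses the double-angle (real Bloch-circle) parametrization $P_a=\tfrac12\bigl(I+R(2\theta)\bigr)$. This turns the hypothesis into equality of the sums $s$ of two unit complex numbers. For $s\neq0$ the unordered pair is the root set of $z^2-sz+s/\bar s$, and $s=0$ is exactly the excluded case $a_1\perp a_2$. The argument is complete as it stands; the trace, determinant and $|d|=|c|$ computations are not needed for it.

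Your route gives a geometric explanation of the real--complex contrast. A chord of a circle is determined by its midpoint unless that midpoint is the centre. On the Bloch sphere of $\C^2$, by contrast, the chords with a given nonzero interior midpoint form a one-parameter family, and this is exactly the freedom the phase $\eta$ exploits in the $\Htwenty$ coordinates. The paper's version, in return, is pure linear algebra with no parametrization of the plane. It also matches directly the ``two simple eigenvalues fix the two rays'' mechanism the authors invoke in their Discussion.
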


\begin{proof}
All four vectors lie in the range of the common operator in
Eq.~\eqref{eq:two-sums}.  Choose signs so that
$\gamma=\langle a_1,a_2\rangle\in(0,1)$.  The two nonzero eigenvalues
are $1+\gamma$ and $1-\gamma$, with eigenvectors $a_1+a_2$ and
$a_1-a_2$, respectively.  Choose signs and order for the $b$-vectors so
that $\delta=\langle b_1,b_2\rangle\ge0$.  Equality of spectra gives
$\delta=\gamma$.  Since the two eigenvalues are distinct, there are real
nonzero scalars $\lambda,\mu$ such that
\[
 b_1+b_2=\lambda(a_1+a_2),\qquad
 b_1-b_2=\mu(a_1-a_2).
\]
The unit-norm and inner-product equations give
\begin{align*}
 \lambda^2(1+\gamma)+\mu^2(1-\gamma)&=2,\\
 \lambda^2(1+\gamma)-\mu^2(1-\gamma)&=2\gamma.
\end{align*}
Hence $\lambda^2=\mu^2=1$.  Solving for $b_1,b_2$ recovers the two
original rays, possibly with signs and permutation.
\end{proof}

\section{The 20-ray octagon}
\label{sec:h20}

The vertices of $\Htwenty$ are eight midpoint vertices
$v_0,\ldots,v_7$, eight corner vertices
$v_{01},v_{12},\ldots,v_{70}$, and four inner vertices
$a_1,a_2,b_1,b_2$.  Indices are read modulo eight.  The eight perimeter
contexts and four inner contexts are
\begin{align}
 E_j&=\{v_{j-1,j},v_j,v_{j,j+1}\}, &&j=0,\ldots,7,
 \label{eq:perimeter-contexts}\\
 D_0&=\{v_0,a_1,v_4\},&D_2&=\{v_2,a_2,v_6\},\nonumber\\
 D_1&=\{v_1,b_2,v_5\},&D_3&=\{v_3,b_1,v_7\}.
 \label{eq:inner-contexts}
\end{align}
The incidence structure is shown in Fig.~\ref{fig:h20}.

\begin{figure*}[t]
\centering
\begin{tikzpicture}[scale=1.6,every label/.style={font=\scriptsize}]
  \def\Rmid{2}
  \def\Rcor{2.1648}
  \def\Rinn{1.5}
  \foreach \i/\j/\ang in {
    0/1/22.5,1/2/67.5,2/3/112.5,3/4/157.5,
    4/5/202.5,5/6/247.5,6/7/292.5,7/0/337.5
  }{\coordinate (v\i\j) at (\ang:\Rcor);}
  \foreach \i/\ang in {
    0/0,1/45,2/90,3/135,4/180,5/225,6/270,7/315
  }{\coordinate (v\i) at (\ang:\Rmid);}
  \coordinate (a1) at (180:\Rinn);
  \coordinate (a2) at (90:\Rinn);
  \coordinate (b1) at (135:\Rinn);
  \coordinate (b2) at (45:\Rinn);

  \draw[ctxline,draw=ctx1] (v70)--(v01);
  \draw[ctxline,draw=ctx2] (v01)--(v12);
  \draw[ctxline,draw=ctx3] (v12)--(v23);
  \draw[ctxline,draw=ctx4] (v23)--(v34);
  \draw[ctxline,draw=ctx5] (v34)--(v45);
  \draw[ctxline,draw=ctx6] (v45)--(v56);
  \draw[ctxline,draw=ctx7] (v56)--(v67);
  \draw[ctxline,draw=ctx8] (v67)--(v70);
  \draw[ctxline,draw=ctx9] (v0)--(v4);
  \draw[ctxline,draw=ctx10] (v1)--(v5);
  \draw[ctxline,draw=ctx11] (v2)--(v6);
  \draw[ctxline,draw=ctx12] (v3)--(v7);

  \twodotlabel{ctx1}{ctx2}{above right}{$v_{01}$}{(v01)}
  \twodotlabel{ctx2}{ctx3}{above}{$v_{12}$}{(v12)}
  \twodotlabel{ctx3}{ctx4}{above left}{$v_{23}$}{(v23)}
  \twodotlabel{ctx4}{ctx5}{left}{$v_{34}$}{(v34)}
  \twodotlabel{ctx5}{ctx6}{below left}{$v_{45}$}{(v45)}
  \twodotlabel{ctx6}{ctx7}{below}{$v_{56}$}{(v56)}
  \twodotlabel{ctx7}{ctx8}{below right}{$v_{67}$}{(v67)}
  \twodotlabel{ctx8}{ctx1}{right}{$v_{70}$}{(v70)}
  \twodotlabel{ctx1}{ctx9}{right}{$v_0$}{(v0)}
  \twodotlabel{ctx2}{ctx10}{above right}{$v_1$}{(v1)}
  \twodotlabel{ctx3}{ctx11}{above}{$v_2$}{(v2)}
  \twodotlabel{ctx4}{ctx12}{above left}{$v_3$}{(v3)}
  \twodotlabel{ctx5}{ctx9}{left}{$v_4$}{(v4)}
  \twodotlabel{ctx6}{ctx10}{below left}{$v_5$}{(v5)}
  \twodotlabel{ctx7}{ctx11}{below}{$v_6$}{(v6)}
  \twodotlabel{ctx8}{ctx12}{below right}{$v_7$}{(v7)}
  \node[circle,fill=ctx9,draw=ctx9,minimum size=7.2pt,inner sep=0pt,
        label={[font=\scriptsize]above:{$a_1$}}] at (a1) {};
  \node[circle,fill=ctx11,draw=ctx11,minimum size=7.2pt,inner sep=0pt,
        label={[font=\scriptsize]right:{$a_2$}}] at (a2) {};
  \node[circle,fill=ctx12,draw=ctx12,minimum size=7.2pt,inner sep=0pt,
        label={[font=\scriptsize]above:{$b_1$}}] at (b1) {};
  \node[circle,fill=ctx10,draw=ctx10,minimum size=7.2pt,inner sep=0pt,
        label={[font=\scriptsize]above:{$b_2$}}] at (b2) {};
\end{tikzpicture}
\caption{The 20-ray hypergraph $\Htwenty$.  The eight perimeter contexts
$E_0,\ldots,E_7$ form the octagon; the four inner contexts join opposite
midpoints through $a_1,a_2,b_1,b_2$.  Each color denotes one context.}
\label{fig:h20}
\end{figure*}

\subsection{The covering identity}

Consider the two collections of six contexts
\begin{align*}
 \mathcal A&=\{D_0,D_2,E_1,E_3,E_5,E_7\},\\
 \mathcal B&=\{D_1,D_3,E_0,E_2,E_4,E_6\}.
\end{align*}
Every midpoint and every corner occurs exactly once in each collection.
Only the two $a$-vertices occur in $\mathcal A$, and only the two
$b$-vertices occur in $\mathcal B$.  Summing Eq.~\eqref{eq:context-sum}
over the two collections and subtracting therefore gives
\begin{equation}
 P_{a_1}+P_{a_2}=P_{b_1}+P_{b_2}.
 \label{eq:covering-identity}
\end{equation}
This identity is combinatorial: it holds in every real or complex
three-dimensional representation of $\Htwenty$.

\subsection{An exact faithful representation over
\texorpdfstring{$\C^3$}{C3}}

Set
\begin{equation}
 \begin{aligned}
 C&=\frac{49-5\sqrt{69}}{26},&s&=\sqrt C,\\
 N&=1+C,&r&=\sqrt N.
 \end{aligned}
 \label{eq:C-definition}
\end{equation}
The number $C$ is the smaller positive root of
\begin{equation}
 13C^2-49C+13=0,
 \label{eq:C-polynomial}
\end{equation}
so $0<C<1$.  Define the unit phases
\begin{align}
 \eta&=\frac{5+12\ii}{13},\label{eq:eta}\\
 \zeta&=\frac{14+6\sqrt{69}}{65}
 +\ii\frac{4\sqrt{69}-21}{65}.
 \label{eq:zeta}
\end{align}
Let
\begin{align*}
 (\mu_0,\ldots,\mu_7)&=(-1,\eta,1,-\eta,-1,\eta,1,-\eta),\\
 (\tau_0,\ldots,\tau_7)&=(1,\zeta,-\ii,-\ii\zeta,
 -1,-\zeta,\ii,\ii\zeta).
\end{align*}
The inner and midpoint rays are represented by
\begin{align}
 a_1&=(s,1,0),&a_2&=(s,-1,0),\nonumber\\
 b_1&=(s,\eta,0),&b_2&=(s,-\eta,0),
 \label{eq:h20-inner-vectors}\\
 v_j&=(1,s\mu_j,r\tau_j),&&j=0,\ldots,7.
 \label{eq:h20-midpoints}
\end{align}
For $j$ modulo eight, define
\begin{equation}
 v_{j,j+1}=v_j\boxtimes v_{j+1}.
 \label{eq:h20-corners}
\end{equation}
Equivalently, Eq.~\eqref{eq:h20-corners} is the explicit formula
\begin{multline}
 v_{j,j+1}=\bigl(
 sr(\bar\mu_j\bar\tau_{j+1}-\bar\tau_j\bar\mu_{j+1}),\\
 r(\bar\tau_j-\bar\tau_{j+1}),
 s(\bar\mu_{j+1}-\bar\mu_j)\bigr).
 \label{eq:h20-corner-formula}
\end{multline}

\begin{theorem}
\label{thm:h20-complex}
Equations~\eqref{eq:C-definition}--\eqref{eq:h20-corner-formula}
give a faithful orthogonal representation of $\Htwenty$ in $\C^3$.
\end{theorem}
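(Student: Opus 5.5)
The plan is to verify the three defining conditions directly, in three layers: the prescribed orthogonalities, the nonorthogonality of every other pair, and the distinctness of rays. Throughout I would reduce everything to the Gram data of the midpoint vectors. From Eq.~\eqref{eq:h20-midpoints},
\[
 g_{jk}:=\langle v_j,v_k\rangle=1+C\,\bar\mu_j\mu_k+N\,\bar\tau_j\tau_k,
 \qquad g_{jj}=1+C+N=2N .
\]
The first step is to exploit symmetry. The diagonal unitary $U=\mathrm{diag}(1,-1,-\ii)$ satisfies $\mu_{j+2}=-\mu_j$ and $\tau_{j+2}=-\ii\tau_j$. I would check that $U$ maps $v_j\mapsto v_{j+2}$, $a_1\leftrightarrow a_2$ and $b_1\leftrightarrow b_2$. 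Since $\det U$ is a unit, $U$ commutes with $\boxtimes$ up to a phase, so it also maps $v_{j,j+1}\mapsto v_{j+2,j+3}$ up to a phase. This gives a cyclic symmetry of order four, which cuts the number of cases roughly by four.

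\emph{Orthogonalities.} Each corner is orthogonal to its two neighbouring midpoints by the defining property of $\boxtimes$. The relations $\mu_{j+4}=\mu_j$ and $\tau_{j+4}=-\tau_j$ give $g_{j,j+4}=1+C-N=0$, so opposite midpoints are orthogonal. The inner orthogonalities are one-line checks, for example $\langle a_1,v_0\rangle=s-s=0$ and $\langle b_2,v_1\rangle=s-s\bar\eta\eta=0$. What remains is adjacent corners, $v_{j-1,j}\perp v_{j,j+1}$. By Lemma~\ref{lem:gram-closure} with $(x,y,z)=(v_{j-1},v_j,v_{j+1})$ this is equivalent to
\[
 g_{j-1,j}\,g_{j,j+1}=2N\,g_{j-1,j+1}.
\]
The cross products are nonzero because consecutive midpoints are independent. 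Up to the symmetry $U$, only the cases $j=0$ and $j=1$ remain. Each is an identity in $\mathbb Q(\ii,\sqrt{69})$, since $s^2=C$ and $r^2=N$ appear only squared in the $g_{jk}$. I would verify these two identities using Eq.~\eqref{eq:C-polynomial} together with the explicit $\eta$ and $\zeta$. Presumably $C$ and $\zeta$ were chosen precisely so that both hold.

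\emph{Nonorthogonality and distinctness.} Here the useful observation is that $\langle x\boxtimes y,w\rangle=(x\times y)\cdot w=\det(x,y,w)$. Consequently:
\begin{itemize}
\item corner--midpoint and corner--inner products are $3\times3$ determinants built from the given vectors;
\item corner--corner products are, by the Lagrange identity used in Lemma~\ref{lem:gram-closure}, $2\times2$ Gram determinants $\overline{g_{ik}g_{jl}-g_{il}g_{jk}}$;
\item midpoint--midpoint and midpoint--inner products are explicit: $g_{jk}$, and $\langle a_1,v_j\rangle=s(1+\mu_j)$, and so on;
\item inner--inner products are $C\pm1$ and $C\pm\eta$.
\end{itemize}
After reducing by $U$, one tabulates these finitely many quantities in closed form and checks that each is nonzero. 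I would do this by computing exact minimal data such as norms or real and imaginary parts in $\mathbb Q(\sqrt{69})$, possibly together with $s$ and $r$. For distinctness, two rays coincide exactly when their cross product vanishes. Most pairs are already separated by having different orthogonality neighbours: rays with different orthogonal neighbours cannot coincide once nonorthogonality is known. The few remaining pairs are handled by inspecting coordinates; for example, inner vectors have zero third coordinate while midpoints have $r\tau_j\neq0$.

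The main obstacle is the bulk nonorthogonality check. Among the $190$ pairs, the corner-involving determinants contain the irrational factors $s$, $r$, $sr$ and the phase $\zeta$, so nonvanishing is not obvious. My first move is to separate each determinant by monomials in $s$ and $r$ over $\mathbb Q(\ii,\sqrt{69})$ and then argue linear independence. If that fails, I would fall back on certified numerical bounds, namely interval evaluation showing each modulus exceeds the rounding error.
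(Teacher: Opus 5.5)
Your proposal is correct and follows the paper's proof. The inner contexts are checked by direct substitution using $r^2=N$, $|\eta|=|\zeta|=1$ and the antipodal relations, corner--midpoint orthogonality holds by construction, and adjacent corners are handled via Lemma~\ref{lem:gram-closure}. Faithfulness is an exhaustive exact check: the paper records it as the rational overlap table in Appendix~\ref{app:faithfulness} with $1/25\le Q\le 4/5$, where you would use triple products, Gram determinants and a neighbourhood argument for distinctness.

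Your symmetry $U$ is a genuine automorphism, and the check you defer does succeed. The cases $j=0$ and $j=1$ actually coincide, because $g_{12}=g_{70}$ and $g_{02}=g_{71}=1-C-\ii N$. This single complex equation splits, using $|\zeta|=1$, into the paper's linear equations \eqref{eq:closure-real} and \eqref{eq:closure-imag} for $\Re\zeta,\Im\zeta$, and their solution \eqref{eq:xy-C} is unimodular precisely because of Eq.~\eqref{eq:C-polynomial}.
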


\begin{proof}
First, $|\eta|=1$.  Write $\zeta=x+\ii y$.  The four inner contexts
follow directly from $r^2=N$, $|\eta|=|\zeta|=1$, and the antipodal
relations $\mu_{j+4}=\mu_j$, $\tau_{j+4}=-\tau_j$.  For example,
\[
 \langle a_1,v_0\rangle=0,\qquad
 \langle v_0,v_4\rangle=1+C-r^2=0,
\]
and
\[
 \langle b_2,v_1\rangle=0,\qquad
 \langle v_1,v_5\rangle=1+C-r^2|\zeta|^2=0.
\]
The other two inner contexts are identical calculations.

By construction, $v_{j,j+1}$ is orthogonal to $v_j$ and $v_{j+1}$.
It remains to prove orthogonality of the two corner rays in every
perimeter context.  Put
\[
 G_{jk}=\langle v_j,v_k\rangle
 =1+C\bar\mu_j\mu_k+N\bar\tau_j\tau_k.
\]
Lemma~\ref{lem:gram-closure} reduces the eight remaining conditions to
\begin{equation}
 G_{j-1,j}G_{j,j+1}=G_{j-1,j+1}G_{j,j},
 \qquad j=0,\ldots,7.
 \label{eq:h20-closure}
\end{equation}
Substituting the cyclic patterns for $\mu_j$ and $\tau_j$ reduces all eight
conditions to common real and imaginary equations once $x^2+y^2=1$.
Separating those parts gives
\begin{align}
 7Cx-17Cy-13C-13x+13y+13&=0,
 \label{eq:closure-real}
\end{align}
and
\begin{multline}
 -7C^2x+17C^2y+13C^2-20Cx+30Cy\\
 {}+2C-13x+13y+13=0.
 \label{eq:closure-imag}
\end{multline}
Solving these two linear equations for $x$ and $y$ gives
\begin{equation}
 x=\frac{104-76C}{65(1+C)},\qquad
 y=\frac{39-81C}{65(1+C)}.
 \label{eq:xy-C}
\end{equation}
For these values,
\begin{equation}
 x^2+y^2-1=
 \frac{48(13C^2-49C+13)}{325(1+C)^2}=0.
 \label{eq:zeta-unit}
\end{equation}
Using the smaller root in Eq.~\eqref{eq:C-polynomial},
Eq.~\eqref{eq:xy-C} is exactly the phase in Eq.~\eqref{eq:zeta}.
Thus all twelve triples are contexts.

It remains to exclude unintended orthogonalities and ray coincidences.
The exact overlap certificate in Appendix~\ref{app:faithfulness} shows
that the 36 prescribed unordered pairs have zero overlap, whereas the
154 unprescribed pairs have squared normalized overlaps between $1/25$
and $4/5$.  Hence every unprescribed inner product is nonzero and no two
displayed vectors are proportional.  The representation is faithful.
\end{proof}

The two inner pairs indeed decompose the same operator.  Since all four
vectors have squared norm $N$ and $|\eta|=1$,
\begin{equation}
 P_{a_1}+P_{a_2}=P_{b_1}+P_{b_2}
 =\frac{2}{N}\operatorname{diag}(C,1,0).
 \label{eq:complex-two-sum}
\end{equation}
Their within-pair inner products are $C-1\ne0$, and the four rays are
distinct because $\eta\notin\{1,-1\}$.

\subsection{Nonexistence over \texorpdfstring{$\R^3$}{R3}}

\begin{theorem}
\label{thm:h20-real}
The hypergraph $\Htwenty$ has no faithful orthogonal representation in
$\R^3$.
\end{theorem}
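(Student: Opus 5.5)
Suppose, for contradiction, that a faithful orthogonal representation of $\Htwenty$ in $\R^3$ exists, and normalize every vector to unit length. This changes no rays, so faithfulness is preserved, and every projector $P_x$ is simply $xx^{T}$. The argument combines the covering identity \eqref{eq:covering-identity} with Lemma~\ref{lem:real-rigidity}.

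First, the covering identity holds in every three-dimensional representation, so in particular $P_{a_1}+P_{a_2}=P_{b_1}+P_{b_2}$ holds here. Second, I check the hypotheses of Lemma~\ref{lem:real-rigidity} using faithfulness.
\begin{itemize}
\item The vertices $a_1$ and $a_2$ are distinct, so they determine distinct rays.
\item No hyperedge contains both $a_1$ and $a_2$: $a_1$ lies only in $D_0$, and $a_2$ lies only in $D_2$. Condition~2 of the definition therefore gives $\langle a_1,a_2\rangle\ne0$.
\end{itemize}
Lemma~\ref{lem:real-rigidity} then yields $\{[b_1],[b_2]\}=\{[a_1],[a_2]\}$. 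In particular $[b_1]$ equals $[a_1]$ or $[a_2]$. Since $b_1$ is a vertex distinct from both $a_1$ and $a_2$, this contradicts condition~1 of faithfulness. Hence no such representation exists.

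Even the orthogonality conclusion alone would give a contradiction without invoking ray distinctness. For example, if $[b_1]=[a_1]$, then $b_1\perp v_0$ because $a_1\perp v_0$ in $D_0$. But $b_1$ and $v_0$ share no hyperedge, so faithfulness forbids $\langle b_1,v_0\rangle=0$. I will mention this only as a remark, since ray distinctness already suffices.

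The main step to watch is that the covering identity is applied with normalized vectors. Normalization is harmless, because $P_x$ depends only on the ray $[x]$. Likewise, Lemma~\ref{lem:real-rigidity} only needs the $a$-rays to be distinct and nonorthogonal, and both properties come directly from faithfulness. I do not expect any genuine obstacle: the substantive work sits in Lemma~\ref{lem:real-rigidity} and in the covering identity, both established earlier.
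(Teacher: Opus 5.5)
Your proposal is correct and follows the paper's proof: the field-independent covering identity, faithfulness giving that $a_1,a_2$ are distinct and nonorthogonal, then Lemma~\ref{lem:real-rigidity}, and finally a contradiction with ray distinctness. Your side remark is also valid, since $[b_1]=[a_1]$ would force the unprescribed orthogonality $b_1\perp v_0$; as you say, it is not needed.
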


\begin{proof}
Suppose a faithful real representation existed.  The covering argument
is independent of the field, so Eq.~\eqref{eq:covering-identity} would
hold.  Faithfulness implies that $a_1,a_2$ are distinct and
nonorthogonal, because they do not share a context; the same is true of
$b_1,b_2$.  Lemma~\ref{lem:real-rigidity} then identifies the unordered
pair of $b$-rays with the unordered pair of $a$-rays.  This contradicts
the requirement that four distinct vertices label four distinct rays.
\end{proof}

\section{The 21-ray orthogonal augmentation}
\label{sec:h21}

Add one vertex $c$ and two contexts
\begin{equation}
 F_a=\{a_1,c,a_2\},\qquad F_b=\{b_1,c,b_2\}
 \label{eq:added-contexts}
\end{equation}
to $\Htwenty$.  The result is the 21-vertex, 14-context hypergraph
$\Htwentyone$ shown in Fig.~\ref{fig:h21}.  The added contexts force both
inner pairs to be orthogonal and to have the same one-dimensional
orthogonal complement.

\begin{figure*}[t]
\centering
\begin{tikzpicture}[scale=1.6,every label/.style={font=\scriptsize}]
  \def\Rmid{2}
  \def\Rcor{2.1648}
  \def\Rinn{1.5}
  \foreach \i/\j/\ang in {
    0/1/22.5,1/2/67.5,2/3/112.5,3/4/157.5,
    4/5/202.5,5/6/247.5,6/7/292.5,7/0/337.5
  }{\coordinate (v\i\j) at (\ang:\Rcor);}
  \foreach \i/\ang in {
    0/0,1/45,2/90,3/135,4/180,5/225,6/270,7/315
  }{\coordinate (v\i) at (\ang:\Rmid);}
  \coordinate (a1) at (180:\Rinn);
  \coordinate (a2) at (90:\Rinn);
  \coordinate (b1) at (135:\Rinn);
  \coordinate (b2) at (45:\Rinn);
  \coordinate (c) at ($(a1)!0.70710678!(a2)$);

  \draw[ctxline,draw=ctx1] (v70)--(v01);
  \draw[ctxline,draw=ctx2] (v01)--(v12);
  \draw[ctxline,draw=ctx3] (v12)--(v23);
  \draw[ctxline,draw=ctx4] (v23)--(v34);
  \draw[ctxline,draw=ctx5] (v34)--(v45);
  \draw[ctxline,draw=ctx6] (v45)--(v56);
  \draw[ctxline,draw=ctx7] (v56)--(v67);
  \draw[ctxline,draw=ctx8] (v67)--(v70);
  \draw[ctxline,draw=ctx9] (v0)--(v4);
  \draw[ctxline,draw=ctx10] (v1)--(v5);
  \draw[ctxline,draw=ctx11] (v2)--(v6);
  \draw[ctxline,draw=ctx12] (v3)--(v7);
  \draw[ctxline,dashed,draw=ctx13] (a1)--(c)--(a2);
  \draw[ctxline,dashed,draw=ctx14] (b1)--(c)--(b2);

  \twodotlabel{ctx1}{ctx2}{above right}{$v_{01}$}{(v01)}
  \twodotlabel{ctx2}{ctx3}{above}{$v_{12}$}{(v12)}
  \twodotlabel{ctx3}{ctx4}{above left}{$v_{23}$}{(v23)}
  \twodotlabel{ctx4}{ctx5}{left}{$v_{34}$}{(v34)}
  \twodotlabel{ctx5}{ctx6}{below left}{$v_{45}$}{(v45)}
  \twodotlabel{ctx6}{ctx7}{below}{$v_{56}$}{(v56)}
  \twodotlabel{ctx7}{ctx8}{below right}{$v_{67}$}{(v67)}
  \twodotlabel{ctx8}{ctx1}{right}{$v_{70}$}{(v70)}
  \twodotlabel{ctx1}{ctx9}{right}{$v_0$}{(v0)}
  \twodotlabel{ctx2}{ctx10}{above right}{$v_1$}{(v1)}
  \twodotlabel{ctx3}{ctx11}{above}{$v_2$}{(v2)}
  \twodotlabel{ctx4}{ctx12}{above left}{$v_3$}{(v3)}
  \twodotlabel{ctx5}{ctx9}{left}{$v_4$}{(v4)}
  \twodotlabel{ctx6}{ctx10}{below left}{$v_5$}{(v5)}
  \twodotlabel{ctx7}{ctx11}{below}{$v_6$}{(v6)}
  \twodotlabel{ctx8}{ctx12}{below right}{$v_7$}{(v7)}
  \twodotlabel{ctx9}{ctx13}{above}{$a_1$}{(a1)}
  \twodotlabel{ctx11}{ctx13}{right}{$a_2$}{(a2)}
  \twodotlabel{ctx12}{ctx14}{above}{$b_1$}{(b1)}
  \twodotlabel{ctx10}{ctx14}{above}{$b_2$}{(b2)}
  \twodotlabel{ctx13}{ctx14}{above}{$c$}{(c)}
\end{tikzpicture}
\caption{The 21-ray hypergraph $\Htwentyone$.  The twelve solid contexts
are those of $\Htwenty$; the two dashed contexts meet at the new vertex
$c$.}
\label{fig:h21}
\end{figure*}

\subsection{Exact coordinates over \texorpdfstring{$\C^3$}{C3}}

Let
\begin{equation}
 p=\frac{\sqrt{14}+\sqrt2}{4},\qquad
 q=\frac{\sqrt{14}-\sqrt2}{4}.
 \label{eq:pq}
\end{equation}
Then
\begin{equation}
 p^2+q^2=2,\qquad pq=\frac34.
 \label{eq:pq-identities}
\end{equation}
Choose
\begin{align}
 c&=(0,0,1),\nonumber\\
 a_1&=(1,1,0),&a_2&=(1,-1,0),\nonumber\\
 b_1&=(1,\ii,0),&b_2&=(1,-\ii,0),
 \label{eq:h21-inner}
\end{align}
and the eight midpoint representatives
\begin{align}
 v_0&=(1,-1,\sqrt2),&v_4&=(1,-1,-\sqrt2),\nonumber\\
 v_1&=(1,\ii,p+\ii q),&v_5&=(1,\ii,-p-\ii q),\nonumber\\
 v_2&=(1,1,-\ii\sqrt2),&v_6&=(1,1,\ii\sqrt2),\nonumber\\
 v_3&=(1,-\ii,q-\ii p),&v_7&=(1,-\ii,-q+\ii p).
 \label{eq:h21-midpoints}
\end{align}
The eight corner representatives are
\begin{align}
 v_{01}&=(-p+\ii(q+\sqrt2),\ \sqrt2-p+\ii q,\ 1-\ii),\nonumber\\
 v_{12}&=(\sqrt2-p+\ii q,\ p-\ii(q+\sqrt2),\ 1+\ii),\nonumber\\
 v_{23}&=(q+\sqrt2+\ii p,\ -q+\ii(\sqrt2-p),\ -1+\ii),\nonumber\\
 v_{34}&=(q+\ii(p-\sqrt2),\ q+\sqrt2+\ii p,\ -1-\ii),\nonumber\\
 v_{45}&=(p-\ii(q+\sqrt2),\ -\sqrt2+p-\ii q,\ 1-\ii),\nonumber\\
 v_{56}&=(-\sqrt2+p-\ii q,\ -p+\ii(q+\sqrt2),\ 1+\ii),\nonumber\\
 v_{67}&=(-q-\sqrt2-\ii p,\ q-\ii(\sqrt2-p),\ -1+\ii),\nonumber\\
 v_{70}&=(-q-\ii(p-\sqrt2),\ -q-\sqrt2-\ii p,\ -1-\ii).
 \label{eq:h21-corners}
\end{align}

\begin{theorem}
\label{thm:h21-complex}
Equations~\eqref{eq:pq}--\eqref{eq:h21-corners} give a faithful
orthogonal representation of $\Htwentyone$ in $\C^3$.
\end{theorem}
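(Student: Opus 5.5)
The plan follows the structure of the proof of Theorem~\ref{thm:h20-complex}. There are three parts: the inner and added contexts, the perimeter contexts via Lemma~\ref{lem:gram-closure}, and faithfulness by an exact overlap certificate.

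\emph{Inner and added contexts.} With $c=(0,0,1)$ and the inner vectors in the $(x_1,x_2)$-plane, $c$ is orthogonal to $a_1,a_2,b_1,b_2$. We also have $\langle a_1,a_2\rangle=1-1=0$ and $\langle b_1,b_2\rangle=1+\overline{\ii}(-\ii)=1-1=0$. So $F_a$ and $F_b$ are contexts.

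For the four inner contexts $D_0,\dots,D_3$, the midpoint pairs $v_j,v_{j+4}$ share their first two coordinates and have negated third coordinates. Hence
\[
\langle v_j,v_{j+4}\rangle=1+|v_{j,2}|^2-|v_{j,3}|^2 .
\]
This equals $1+1-2=0$ for $j=0,2$, and $1+1-(p^2+q^2)=0$ for $j=1,3$, by Eq.~\eqref{eq:pq-identities}. Orthogonality of the inner ray to both midpoints only involves the first two coordinates. For example, $\langle a_1,v_0\rangle=1-1=0$ and $\langle b_2,v_1\rangle=1+\overline{(-\ii)}\,\ii=1-1=0$; the cases $a_2,v_2,v_6$ and $b_1,v_3,v_7$ are analogous.

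\emph{Perimeter contexts.} In each $E_j$ I would check that $v_{j,j+1}$ is proportional to $v_j\boxtimes v_{j+1}$. Directly, this means computing $\langle v_j,v_{j,j+1}\rangle=\langle v_{j+1},v_{j,j+1}\rangle=0$ with $p^2+q^2=2$ and $pq=3/4$. Once that holds, the remaining orthogonality $\langle v_{j-1,j},v_{j,j+1}\rangle=0$ reduces, by Lemma~\ref{lem:gram-closure}, to the closure identity
\[
G_{j-1,j}G_{j,j+1}=G_{j-1,j+1}G_{jj},\qquad G_{jk}=\langle v_j,v_k\rangle .
\]
Here $G_{jj}=4$ for every $j$. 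The configuration has a symmetry: $v_{j+4}$ is obtained from $v_j$ by negating the third coordinate, and the listed corners satisfy $v_{j+4,j+5}=-\overline{\,\cdot\,}$-type sign patterns. Using this, only $j=0,1,2,3$ (or even fewer) need to be checked; the rest follow by applying the unitary $\operatorname{diag}(1,1,-1)$ up to phases.

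\emph{Main obstacle: the closure identities.} I expect this to be the bulk of the work. Each identity is a polynomial identity in $p,q,\sqrt2$, to be reduced modulo $p^2+q^2=2$ and $pq=3/4$. If the direct substitution becomes messy, I would instead verify $\langle v_{j-1,j},v_{j,j+1}\rangle=0$ directly from the displayed corner coordinates. Then every term is a product of linear forms in $p,q,\sqrt2$, and the cancellation needs only $p^2+q^2=2$ and $pq=3/4$. Note also that $p^2=(8+\sqrt{28})/8$ is not needed separately, since only the two symmetric functions of $p,q$ appear.

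\emph{Faithfulness.} There are $21$ vertices, so $210$ unordered pairs. Of these, $14\cdot 3=42$ pairs are prescribed and $168$ are unprescribed. Following Appendix~\ref{app:faithfulness}, I would tabulate the exact squared normalized overlaps $|\langle x,y\rangle|^2/(\|x\|^2\|y\|^2)$ of all unprescribed pairs. Each is an element of $\mathbb Q(\sqrt2,\sqrt7)$, which can be bounded away from $0$ and $1$ numerically with rigorous error margins. That excludes accidental orthogonalities and coincident rays alike. Many pairs collapse under the symmetry $j\mapsto j+4$, which reduces the table.
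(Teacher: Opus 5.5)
Your plan is correct and is essentially the paper's proof. Both proofs check $F_a,F_b,D_0,\dots,D_3$ directly, treat the corners as Hermitian cross products so that Lemma~\ref{lem:gram-closure} reduces the perimeter to the closure identities, and finish with an exact overlap certificate for the $168$ unprescribed pairs; the paper finds every such value equal to $1/4$ or $1/2$. Three small corrections follow.

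\emph{The closure identities are not symmetric in $p,q$.} At $j=0$ one gets $G_{70}G_{01}=-6\ii-2\sqrt2\,\ii\,(p-q)$ against $G_{71}G_{00}=-8\ii$. So you need $p-q=\sqrt2/2$, which comes from Eq.~\eqref{eq:pq} itself. The two relations $p^2+q^2=2$ and $pq=3/4$ only give $(p-q)^2=1/2$, and swapping $p,q$ would break the perimeter contexts.

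\emph{No relation is needed for the corner--midpoint orthogonalities.} The displayed corners are exactly $v_j\boxtimes v_{j+1}$, so they are orthogonal to $v_j$ and $v_{j+1}$ identically.

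\emph{The symmetry has no conjugation.} The relation is $v_{j+4,j+5}=-\operatorname{diag}(1,1,-1)\,v_{j,j+1}$.
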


\begin{proof}
The two added contexts are immediate from Eq.~\eqref{eq:h21-inner}.
For the original inner contexts, representative calculations are
\begin{align*}
 \langle a_1,v_0\rangle&=0,&
 \langle v_0,v_4\rangle&=1+1-2=0,\\
 \langle b_2,v_1\rangle&=0,&
 \langle v_1,v_5\rangle&=2-(p^2+q^2)=0.
\end{align*}
The two remaining inner contexts follow in the same way.

Direct evaluation of the Hermitian cross product gives, exactly,
\[
 v_{j,j+1}=v_j\boxtimes v_{j+1}
\]
for each representative in Eq.~\eqref{eq:h21-corners}.  Hence every
corner is orthogonal to its two incident midpoints.  Substitution in the
Gram closure identity~\eqref{eq:gram-closure} reduces the eight remaining
perimeter inner products to the two relations in
Eq.~\eqref{eq:pq-identities}; they therefore vanish.

Finally, Appendix~\ref{app:faithfulness} gives an exact check of all 210
unordered pairs.  The 42 pairs prescribed by the fourteen contexts have
zero overlap.  Every remaining pair has squared normalized overlap
either $1/4$ or $1/2$.  Thus there are no additional orthogonalities and
no ray coincidences.
\end{proof}

\subsection{Nonexistence over \texorpdfstring{$\R^3$}{R3}}

\begin{theorem}
\label{thm:h21-real}
The hypergraph $\Htwentyone$ has no faithful orthogonal representation
in $\R^3$.
\end{theorem}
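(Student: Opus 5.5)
Unlike the proof of Theorem~\ref{thm:h20-real}, Lemma~\ref{lem:real-rigidity} gives nothing here: the added contexts $F_a,F_b$ make both inner pairs orthogonal, so Eq.~\eqref{eq:covering-identity} reduces to the trivial identity $I_3-P_c=I_3-P_c$. The plan is to use the real rigidity to pin down the midpoint rays almost completely. Then the eight perimeter conditions, rewritten through Lemma~\ref{lem:gram-closure}, should become an overdetermined trigonometric system with no admissible solution.

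\emph{Normalization.} Suppose a faithful real representation exists, with all vectors of unit length. After a rotation I will take $c=e_3$, $a_1=e_1$, $a_2=e_2$. Then $\{b_1,b_2\}$ is another orthonormal pair in $e_3^\perp$, say $b_1=(\cos\varphi,\sin\varphi,0)$ and $b_2=(-\sin\varphi,\cos\varphi,0)$. Faithfulness requires $a_i\not\perp b_k$ and distinct rays, which excludes $\varphi\in\frac{\pi}{2}\mathbb Z$.

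The context $D_0=\{v_0,a_1,v_4\}$ forces $v_0,v_4$ to be an orthonormal basis of $a_1^\perp=\operatorname{span}(e_2,e_3)$, so
\[
v_0=\cos t_0\,e_2+\sin t_0\,e_3,\qquad v_4=-\sin t_0\,e_2+\cos t_0\,e_3 .
\]
The same argument applies to $D_2$ in $\operatorname{span}(e_1,e_3)$, to $D_1$ in $\operatorname{span}(b_1,e_3)$, and to $D_3$ in $\operatorname{span}(b_2,e_3)$. Hence each midpoint has the form $v_j=\cos t_j\,u_j+\sin t_j\,e_3$, with planar unit directions $u_0=u_4=e_2$, $u_2=u_6=e_1$, $u_1=u_5=b_1$ (since $v_1,v_5\perp b_2$), and $u_3=u_7=b_2$, together with $t_{j+4}=t_j+\pi/2$. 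So there are only five parameters: $t_0,t_1,t_2,t_3$ and $\varphi$. Faithfulness also excludes $t_j\in\frac{\pi}{2}\mathbb Z$, since otherwise some $v_j$ would be orthogonal to $c$ or coincide with an inner ray.

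\emph{Perimeter equations.} In $\R^3$ the corner $v_{j,j+1}$ is forced to be the ray of $v_j\times v_{j+1}$; this is nonzero because distinct rays are not parallel. By Lemma~\ref{lem:gram-closure}, which holds verbatim for real vectors, the contexts $E_j$ are equivalent to
\[
G_{j-1,j}G_{j,j+1}=G_{j-1,j+1},\qquad G_{jk}=\cos t_j\cos t_k\,\langle u_j,u_k\rangle+\sin t_j\sin t_k .
\]
All the $\langle u_j,u_k\rangle$ lie in $\{0,\pm\cos\varphi,\pm\sin\varphi\}$, and $G_{j-1,j+1}$ always pairs two orthogonal planar directions, so it equals $\sin t_{j-1}\sin t_{j+1}$ up to the $\pi/2$ shifts. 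Dividing by $\cos$ factors, I will rewrite the system in $T_j=\tan t_j$ (with $T_{j+4}=-1/T_j$), $\kappa=\cos\varphi$ and $\sigma=\sin\varphi$. This gives eight polynomial equations in $T_0,\dots,T_3$ and $\kappa,\sigma$, subject to $\kappa^2+\sigma^2=1$.

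\emph{Elimination (the main obstacle).} The equations come in antipodal pairs, $j$ and $j+4$, related by $T\mapsto -1/T$. I will first combine each antipodal pair to eliminate one $T$ variable: for example, the pair $E_1,E_5$ involves $T_0,T_1,T_2$ and should let me solve for $T_1$ in terms of $T_0,T_2,\varphi$. The pair $E_3,E_7$ should give a competing expression for $T_3$. Substituting into the remaining pairs $E_0,E_4$ and $E_2,E_6$ should then leave an equation of the form (polynomial)$\cdot\kappa\sigma=0$, or one that forces some $T_j\in\{0,\infty\}$. Either outcome contradicts the faithfulness exclusions above.

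As a sanity check, the complex solution of Theorem~\ref{thm:h21-complex} needs the phase $\ii$ in $b_k$ and $v_j$ in an essential way, suggesting the real system collapses precisely at the point where $p^2+q^2=2$ and $pq=3/4$ would require a nonreal phase. If direct elimination is unwieldy, I would first exploit the symmetry $j\mapsto j+2$ (interchanging the $a$ and $b$ frames up to rotation by $\varphi$) to reduce to two unknown $T$'s, and then compute a resultant.
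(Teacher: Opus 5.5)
\textbf{There is a genuine gap.} Your normalization, the reduction of each midpoint to $v_j\propto u_j+T_je_3$ with $T_{j+4}=-1/T_j$, and the elimination of the corners by the real Gram closure are all correct, and in substance they are the paper's setup. With $c_j=\langle u_j,u_{j+1}\rangle$, the context $E_j$ becomes $T_{j-1}T_{j+1}=T_j(c_{j-1}T_{j+1}+c_jT_{j-1})+c_{j-1}c_j$.

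The gap is the step you call the main obstacle. It is only conjectured, and the endpoint you predict would not finish the proof. An identity of the form (polynomial)$\cdot\kappa\sigma=0$ contradicts faithfulness only once the polynomial factor is shown to be nonzero, and nothing in your plan shows this. No elimination, resultant or factorization can do it alone. Read over $\mathbb C$, the eight closure equations have solutions with $\kappa\sigma\ne0$ and every $T_j\notin\{0,\infty\}$. For instance, take $\varphi=\pi/4$, $T_0=2\ii$, $T_2=\ii(\sqrt{37}-1)/6$, and let $E_1,E_3$ determine $T_1,T_3$; the other six equations then hold. So the reality of the $T_j$ and of $\varphi$ must enter through an inequality, and that ingredient is missing.

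Your fallback of imposing the $j\mapsto j+2$ symmetry is also not admissible in a nonexistence proof. An automorphism of $\Htwentyone$ does not force an arbitrary representation to be symmetric.

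\textbf{How to close it.} The repair is short and starts from your own first step. The contexts $E_1$ and $E_5$ are both linear in $T_1$:
\[
 T_1(\kappa T_0+\sigma T_2)=T_0T_2-\kappa\sigma,\qquad T_1(1-\kappa\sigma T_0T_2)=\sigma T_0+\kappa T_2 .
\]
Cross-multiplying eliminates $T_1$ without any division and gives
\[
 \kappa\sigma\bigl[(1+T_0^2)(1+T_2^2)-\kappa\sigma T_0T_2\bigr]=0.
\]
For real values, $(1+T_0^2)(1+T_2^2)\ge1+2|T_0T_2|$ and $|\kappa\sigma|\le\frac12$, so the bracket is at least $1$. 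Hence $\kappa\sigma=0$, which contradicts $\varphi\notin\frac{\pi}{2}\mathbb Z$.

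This is exactly the paper's mechanism. The paper solves for $Y_0$ and $Y_4$ from $E_0$ and $E_4$ and imposes $Y_0Y_4=-1$ from $D_0$, reaching the master equation~\eqref{eq:master}. In your variables that equation reads $(1+T_1^2)(1+T_3^2)+\kappa\sigma T_1T_3=0$. The paper excludes it by the same AM--GM bound, $X_1^2+Y_3^2\ge2|X_1Y_3|$.
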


\begin{proof}
Assume that a faithful real representation exists.  A global orthogonal
transformation and independent rescalings of ray representatives allow
us to set
\begin{equation}
 c=(0,0,1),\qquad a_1=(1,0,0),\qquad a_2=(0,1,0).
 \label{eq:real-normal-a}
\end{equation}
The second added context lies in the plane $c^\perp$, so for some angle
$\theta$,
\begin{equation}
 b_1=(\cos\theta,\sin\theta,0),\qquad
 b_2=(-\sin\theta,\cos\theta,0).
 \label{eq:real-normal-b}
\end{equation}
Faithfulness excludes $\theta\in\frac{\pi}{2}\mathbb Z$, since those
values identify a $b$-ray with an $a$-ray.  Put
\begin{equation}
 m=\tan\theta\ne0,\qquad M=1+m^2.
 \label{eq:mM}
\end{equation}

Write a representative vector of a midpoint ray as
$v_j=(X_j,Y_j,Z_j)$.  No midpoint ray can lie in $c^\perp$.
Indeed, a planar vector orthogonal to the inner ray in its
context is proportional to the other member of one of the two planar
bases $\{a_1,a_2\}$ or $\{b_1,b_2\}$, contradicting faithfulness.
Thus $Z_j\ne0$ for every $j$, and each representative may be rescaled to
\begin{equation}
 v_j=(X_j,Y_j,1).
 \label{eq:affine-midpoints}
\end{equation}

The inner contexts $D_0,D_1,D_3$ imply
\begin{align}
 X_0&=X_4=0,&Y_0Y_4+1&=0,
 \label{eq:D0-real}\\
 Y_1&=mX_1,&Y_5&=mX_5,&MX_1X_5+1&=0,
 \label{eq:D1-real}\\
 X_3&=-mY_3,&X_7&=-mY_7,&MY_3Y_7+1&=0.
 \label{eq:D3-real}
\end{align}
In particular,
\begin{equation}
 X_5=-\frac1{MX_1},\qquad
 Y_7=-\frac1{MY_3},
 \label{eq:X5Y7}
\end{equation}
and $X_1Y_3\ne0$.

The corner rays can be eliminated from the perimeter by the real form
of Lemma~\ref{lem:gram-closure}.  At $j=0$,
Eq.~\eqref{eq:gram-closure} factors as
\begin{equation}
 Y_0\bigl((mX_1Y_7-1)Y_0+Y_7+mX_1\bigr)=0.
 \label{eq:j0-factor}
\end{equation}
The first factor cannot vanish because of Eq.~\eqref{eq:D0-real}.
Moreover, the coefficient $1-mX_1Y_7$ cannot vanish: otherwise
Eq.~\eqref{eq:j0-factor} would also give $Y_7+mX_1=0$, and the two
relations would imply $-m^2X_1^2=1$.  Therefore
\begin{equation}
 Y_0=\frac{Y_7+mX_1}{1-mX_1Y_7}.
 \label{eq:Y0-first}
\end{equation}
The closure equation at $j=4$ gives analogously
\begin{equation}
 Y_4=\frac{Y_3+mX_5}{1-mY_3X_5},
 \label{eq:Y4-first}
\end{equation}
with a nonzero denominator.

Substituting Eq.~\eqref{eq:X5Y7} into
Eqs.~\eqref{eq:Y0-first} and~\eqref{eq:Y4-first} yields
\begin{equation}
 Y_0=\frac{mMX_1Y_3-1}{MY_3+mX_1},\qquad
 Y_4=\frac{MX_1Y_3-m}{MX_1+mY_3}.
 \label{eq:Y0Y4-final}
\end{equation}
Set $u=X_1Y_3$.  The remaining equation $Y_0Y_4=-1$ becomes
\begin{equation}
 (mMu-1)(Mu-m)=-(mX_1+MY_3)(MX_1+mY_3).
 \label{eq:cross-multiplied}
\end{equation}
Expanding the left-hand side gives
\begin{align*}
 (mMu-1)(Mu-m)
 &=mM^2u^2-M(m^2+1)u+m\\
 &=mM^2u^2-M^2u+m,
\end{align*}
where the second equality uses $M=1+m^2$.
The right-hand side is
\[
 -u(M^2+m^2)-mM(X_1^2+Y_3^2).
\]
After cancellation and division by $m\ne0$, Eq.~\eqref{eq:cross-multiplied}
is therefore equivalent to the master equation
\begin{equation}
 M^2u^2+mu+1+M(X_1^2+Y_3^2)=0.
 \label{eq:master}
\end{equation}

Since $X_1,Y_3$ are real,
$X_1^2+Y_3^2\ge2|X_1Y_3|=2|u|$.  The left-hand side of
Eq.~\eqref{eq:master} is consequently at least
\begin{align*}
 M^2u^2+mu+1+2M|u|
 &\ge 1+M^2u^2+|u|(2M-|m|)\\
 &\ge1.
\end{align*}
Here, with $t=|m|$,
\[
 2M-|m|=2t^2-t+2=2\left(t-\frac14\right)^2+\frac{15}{8}>0.
\]
This contradicts
Eq.~\eqref{eq:master}, and the assumed real representation cannot exist.
\end{proof}

\section{Faithful realification in six dimensions}
\label{sec:realification}

Theorems~\ref{thm:h20-real} and~\ref{thm:h21-real} keep the ambient
dimension fixed.  When dimension doubling is allowed, both hypergraphs have
faithful real representations.  We first recall the finite construction and
then give explicit phase choices for the present
coordinates~\cite{svozil-2025-rvc}.

\begin{theorem}[Phase-adjusted realification]
\label{thm:realification}
Every finite faithful orthogonal representation by rays in $\C^3$ induces a
faithful orthogonal representation of the same hypergraph by rays in $\R^6$.
\end{theorem}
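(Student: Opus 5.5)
The plan is to use the standard realification map $\rho:\C^3\to\R^6$, $\rho(x)=(\operatorname{Re}x,\operatorname{Im}x)$, together with a suitable choice of phase for each ray representative. The key identity is
\[
 \rho(x)\mathbin{\cdot}\rho(y)=\operatorname{Re}\langle x,y\rangle .
\]
Two consequences follow at once. Hermitian orthogonality $\langle x,y\rangle=0$ implies real orthogonality of the images, so every prescribed orthogonality is preserved. Also $\rho$ is injective and $\R$-linear, so $\rho(x)\neq0$ whenever $x\neq0$.

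The difficulty is that $\rho$ is not well defined on rays. Multiplying $x$ by a phase $e^{\ii\phi}$ changes $\operatorname{Re}\langle x,y\rangle$. An unprescribed pair with $\langle x,y\rangle\neq0$ could therefore become orthogonal in $\R^6$ if that inner product happened to be purely imaginary. For this reason I would fix one representative $x_v$ per vertex and multiply it by a phase $e^{\ii\phi_v}$, to be chosen later.

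For a pair $u,v$ with $\langle x_u,x_v\rangle=r_{uv}e^{\ii\alpha_{uv}}$, $r_{uv}>0$, the real overlap after the phase change is
\[
 r_{uv}\cos(\alpha_{uv}+\phi_v-\phi_u).
\]
This vanishes only when $\phi_v-\phi_u\equiv\pi/2-\alpha_{uv}\pmod\pi$. So for each unordered unprescribed pair, the set of bad phase vectors $(\phi_v)_{v\in V}\in(\R/2\pi)^{V}$ is a finite union of codimension-one affine subtori. Because $V$ is finite, there are finitely many such pairs, and the union of these measure-zero sets cannot cover the torus. A generic choice of phases, for instance phases that are rationally independent of all the $\alpha_{uv}$, therefore preserves every nonorthogonality. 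Prescribed pairs remain orthogonal regardless of the phases.

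It remains to show that distinct vertices give distinct rays in $\R^6$. If $\rho(e^{\ii\phi_u}x_u)=\lambda\rho(e^{\ii\phi_v}x_v)$ with $\lambda\in\R$, then $e^{\ii\phi_u}x_u=\lambda e^{\ii\phi_v}x_v$ by $\R$-linearity and injectivity of $\rho$. That would make $x_u,x_v$ the same complex ray, which faithfulness excludes. Alternatively, once nonorthogonality is secured, one can also exclude proportionality generically: the condition $|\operatorname{Re}\langle\cdot,\cdot\rangle|=\|\cdot\|\|\cdot\|$ forces complex proportionality in any case.

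So preservation of orthogonality and of distinctness are immediate. The main obstacle is the reflection of orthogonality, namely the generic phase argument above. To make it concrete for the two hypergraphs, I would exhibit explicit phases, for example $e^{\ii\phi_v}$ with $\phi_v$ chosen vertex by vertex. Using the exact overlap certificates of Appendix~\ref{app:faithfulness}, I would check that no unprescribed overlap becomes purely imaginary. I would also note the remark that each context then spans only a three-dimensional subspace of $\R^6$, so maximality is not claimed.
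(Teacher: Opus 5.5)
Your proposal is correct and follows the paper's proof. Both use the same realification with per-vertex phases. In both, each unprescribed pair forbids $\phi_v-\phi_u$ from a single residue class mod $\pi$, and ray distinctness follows from $\R$-linearity and injectivity.

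The only difference is how the finitely many bad conditions are avoided. You use a measure-zero argument on $(\R/2\pi\mathbb{Z})^V$, while the paper fixes the phases inductively, each new one avoiding finitely many values; either works. Your parenthetical ``rationally independent'' example is imprecise as stated. It should require the differences $\phi_v-\phi_u$ to avoid the $\mathbb{Q}$-span of $\pi$ and the $\alpha_{uv}$, but the measure argument does not depend on it.
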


\begin{proof}
Let $\psi_1,\ldots,\psi_N\in\C^3$ be nonzero representatives and put
$c_{k\ell}=\langle\psi_k,\psi_\ell\rangle$.  Define the canonical
realification
\begin{equation}
 \Phi_0(z_1,z_2,z_3)
 = (\Re z_1,\Re z_2,\Re z_3,\Im z_1,\Im z_2,\Im z_3).
 \label{eq:canonical-realification}
\end{equation}
For phases $\theta_k\in\R$, set
$R_k=\Phi_0(e^{\ii\theta_k}\psi_k)$.  Direct expansion gives
\begin{equation}
 R_k\mathbin{\cdot}R_\ell
 =\Re\!\left(e^{\ii(\theta_\ell-\theta_k)}c_{k\ell}\right).
 \label{eq:realified-dot}
\end{equation}
Thus $c_{k\ell}=0$ implies $R_k\cdot R_\ell=0$ for every phase choice.
If $c_{k\ell}\ne0$, write
$c_{k\ell}=|c_{k\ell}|e^{\ii\varphi_{k\ell}}$.  The right-hand side of
Eq.~\eqref{eq:realified-dot} vanishes precisely when
\begin{equation}
 \theta_\ell-\theta_k+\varphi_{k\ell}
 \equiv\frac{\pi}{2}\pmod\pi.
 \label{eq:forbidden-phase}
\end{equation}

Choose $\theta_1,\ldots,\theta_N$ inductively.  Suppose that the first $m-1$
phases have been fixed.  For each $k<m$ with $c_{km}\ne0$, the unwanted
orthogonality equation $R_k\mathbin{\cdot}R_m=0$ excludes, by
Eq.~\eqref{eq:forbidden-phase}, exactly two values of $\theta_m$ modulo
$2\pi$.  Their union is finite, so a phase outside it exists.  Continuing
through $m=N$ makes the real dot product nonzero for every originally
nonorthogonal pair, while all original orthogonalities remain zero.

It remains only to check ray distinctness.  If $R_k$ and $R_\ell$ were real
proportional, then $e^{\ii\theta_k}\psi_k$ and
$e^{\ii\theta_\ell}\psi_\ell$ would be real proportional and hence would
represent the same complex ray.  Faithfulness of the original representation
excludes this.  Therefore the realified representation is faithful.
\end{proof}

For explicit coordinates it is convenient to parametrize the phase by a real
number $t$:
\begin{equation}
 \lambda_t=\frac{1+\ii t}{\sqrt{1+t^2}},\qquad
 \Phi_t(z)=\Phi_0(\lambda_tz).
 \label{eq:lambda-t}
\end{equation}
Writing $z=x+\ii y$ with $x,y\in\R^3$ gives the entirely real formula
\begin{equation}
 \Phi_t(x+\ii y)=\frac{(x-ty,\ y+tx)}{\sqrt{1+t^2}}\in\R^6.
 \label{eq:explicit-realification}
\end{equation}
Moreover,
\begin{equation}
 \Phi_{t_u}(u)\mathbin{\cdot}\Phi_{t_v}(v)
 =\Re\!\left(
 \frac{(1-\ii t_u)(1+\ii t_v)}
 {\sqrt{(1+t_u^2)(1+t_v^2)}}\langle u,v\rangle
 \right).
 \label{eq:t-dot}
\end{equation}

\begin{corollary}[Explicit real six-dimensional coordinates]
\label{cor:r6}
Let $\psi_v^{(20)}$ denote the representatives in
Eqs.~\eqref{eq:C-definition}--\eqref{eq:h20-corner-formula}.  Define
\begin{equation}
 t_v^{(20)}=
 \begin{cases}
  1,&v\in\{v_{34},v_{45},v_{56}\},\\
  2,&v\in\{v_{67},v_{70}\},\\
  0,&\text{otherwise}.
 \end{cases}
 \label{eq:t20}
\end{equation}
Then $R_v^{(20)}=\Phi_{t_v^{(20)}}(\psi_v^{(20)})$ is a faithful orthogonal
representation of $\Htwenty$ in $\R^6$.

Likewise, let $\psi_v^{(21)}$ denote the representatives in
Eqs.~\eqref{eq:pq}--\eqref{eq:h21-corners}, and define
\begin{equation}
 t_v^{(21)}=
 \begin{cases}
  1,&v=v_{45},\\
  2,&v\in\{v_2,v_3,v_6,v_7,v_{34},v_{56},v_{70}\},\\
  3,&v=v_{67},\\
  0,&\text{otherwise}.
 \end{cases}
 \label{eq:t21}
\end{equation}
Then $R_v^{(21)}=\Phi_{t_v^{(21)}}(\psi_v^{(21)})$ is a faithful orthogonal
representation of $\Htwentyone$ in $\R^6$.
\end{corollary}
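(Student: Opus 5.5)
The plan is to use the proof of Theorem~\ref{thm:realification} only for its first half, the fact that prescribed orthogonalities survive, and to replace the inductive phase choice by a finite exact check of the specific values $t_v$ in Eqs.~\eqref{eq:t20} and~\eqref{eq:t21}. By Eq.~\eqref{eq:t-dot}, if $\langle u,v\rangle=0$ then $\Phi_{t_u}(u)\cdot\Phi_{t_v}(v)=0$ whatever $t_u,t_v$ are. Theorems~\ref{thm:h20-complex} and~\ref{thm:h21-complex} show that the 36 (respectively 42) prescribed pairs are Hermitian-orthogonal, so all contexts are immediately mutually orthogonal triples in $\R^6$. Ray distinctness also carries over verbatim from the argument in Theorem~\ref{thm:realification}. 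If $\Phi_{t_u}(u)$ and $\Phi_{t_v}(v)$ were real proportional, then $\lambda_{t_u}u$ and $\lambda_{t_v}v$ would be real proportional, so $u$ and $v$ would span the same complex ray. Faithfulness over $\C^3$ excludes this.

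It remains to show that each unprescribed pair has nonzero real dot product. Write $c=\langle u,v\rangle=\alpha+\ii\beta$. Multiplying Eq.~\eqref{eq:t-dot} by the positive normalization gives the condition
\[
 \Re\bigl((1-\ii t_u)(1+\ii t_v)(\alpha+\ii\beta)\bigr)
 =(1+t_ut_v)\alpha-(t_v-t_u)\beta\neq0 .
\]
This quantity is always nonzero when $\alpha\ne0$ and $t_u=t_v$. So the check reduces to two kinds of pair. The first kind is pairs with $t_u=t_v$ whose overlap is purely imaginary, $\alpha=0$. The second kind is pairs with $t_u\ne t_v$, for which we need $(1+t_ut_v)\alpha\ne(t_v-t_u)\beta$. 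For each hypergraph I would tabulate the exact inner products of all unprescribed pairs: 154 pairs for $\Htwenty$ and 168 for $\Htwentyone$. These values are already computed in the overlap certificate of Appendix~\ref{app:faithfulness}. I would record their real and imaginary parts in the number fields $\mathbb Q(\sqrt{69},s,r)$ and $\mathbb Q(\sqrt2,\sqrt{14})$ respectively.

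Symmetry shortens this table. For $\Htwentyone$, the coordinates have a visible $j\mapsto j+4$ antipodal structure ($v_{j+4}$ and $v_{j+4,j+5}$ differ from $v_j$ and $v_{j,j+1}$ by sign patterns). I would also group pairs by orbit under the relabelling that sends $a\leftrightarrow b$ and multiplies the second coordinate by $\ii$. Within each orbit only the pattern of $t$-values changes.

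The main obstacle is the bookkeeping for $\Htwenty$. There the overlaps involve the nested radicals $s=\sqrt C$ and $r=\sqrt N$, and for a pair with $t_u\ne t_v$ one must certify $(1+t_ut_v)\alpha-(t_v-t_u)\beta\neq0$ exactly rather than numerically. I would first evaluate every quantity numerically to high precision and identify any pair where the value is near zero. I expect there are none, since the $t$-values were evidently chosen to avoid them. I would then certify the nonvanishing with interval arithmetic at a rigorous error bound, which suffices because only nonvanishing, not an exact value, is needed. Pairs whose overlap is real or imaginary up to sign (for example the inner pairs with overlap $C-1$) can be dispatched by hand. For $\Htwentyone$ the overlaps have squared moduli $1/4$ or $1/2$ after normalization and lie in $\mathbb Q(\ii,\sqrt2,\sqrt{14})$. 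There the exact check is short: write each candidate zero as $\rho_1+\rho_2\sqrt2+\rho_3\sqrt{14}+\rho_4\sqrt7$ with rational $\rho_i$, and observe that the coefficients do not all vanish.
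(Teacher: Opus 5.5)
Your proposal is correct and follows the paper's proof: prescribed pairs vanish for every choice of $t$ by Eq.~\eqref{eq:t-dot}, ray distinctness is inherited from Theorem~\ref{thm:realification}, and the remaining 154 (respectively 168) real dot products are certified nonzero by a finite check, which the paper carries out entirely in exact arithmetic. Two small corrections: first, for $\Htwenty$ no interval arithmetic is needed, because every inner product is $1$, $s$, $r$ or $sr$ times an element of $\mathbb Q(\sqrt{69},\ii)$, so each test reduces to the nonvanishing of some $a+b\sqrt{69}$ with $a,b\in\mathbb Q$; second, multiplying the second coordinate by $\ii$ is not a symmetry of the $\Htwentyone$ coordinates (it sends $v_0$ to $(1,-\ii,\sqrt2)$, which is none of the 21 rays), so only the antipodal reduction by $\operatorname{diag}(1,1,-1)$ may be used to shorten the table.
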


\begin{proof}
Equations~\eqref{eq:explicit-realification}, \eqref{eq:t20}, and
\eqref{eq:t21} give all six real coordinates directly from the complex
vectors already displayed.  Exact substitution in Eq.~\eqref{eq:t-dot}
gives zero for exactly 36 of the $\binom{20}{2}=190$ pairs in $\Htwenty$,
namely the pairs prescribed by its twelve contexts.  The other 154 real dot
products are nonzero.  For $\Htwentyone$, exactly 42 of the
$\binom{21}{2}=210$ real dot products vanish, namely the pairs prescribed by
its fourteen contexts, and the other 168 are nonzero.  These calculations
take place in the algebraic fields generated by the radicals in the complex
coordinates and by $\sqrt{1+t^2}$; no numerical tolerance is involved.
Ray distinctness follows from Theorem~\ref{thm:realification}.
\end{proof}

Each displayed three-ray context is therefore an orthogonal triple in
$\R^6$, but it spans only a three-dimensional subspace.  Its projector sum
is the rank-three projector onto that subspace, not $I_6$.  This loss of
maximality is precisely why the real six-dimensional representations do not
contradict the two real three-dimensional no-go theorems.

\section{Discussion}

The two nonexistence proofs expose different real-complex mechanisms.
For $\Htwenty$, the context covering forces a rank-two positive operator
to have two disjoint, nonorthogonal decompositions into two rank-one
projectors.  Complex relative phases make this possible.  Over the
reals, the two simple eigenvalues determine the two rays, producing the
rigidity of Lemma~\ref{lem:real-rigidity}.

For $\Htwentyone$, both inner pairs are orthogonal and their common
projector sum is the identity on $c^\perp$.  The eigenvalue argument is
therefore unavailable: the relevant eigenvalue is degenerate and many
real orthogonal bases of $c^\perp$ exist.  The obstruction instead comes
from compatibility with the octagonal perimeter.  The two reduced
closure equations determine $Y_0$ and $Y_4$, while the inner context
$D_0$ demands $Y_0Y_4=-1$; together they produce the positive master
expression in Eq.~\eqref{eq:master}.

Both statements are dimension-specific.  Corollary~\ref{cor:r6} realizes the
same pairwise orthogonality relations faithfully in $\R^6$, but the specified
triples are no longer complete bases.  Consequently, the three-dimensional
context equation~\eqref{eq:context-sum} is replaced by a rank-three subspace
projector.  The complementary covering no longer forces
Eq.~\eqref{eq:covering-identity}, and the three-dimensional cross-product
closure used for $\Htwentyone$ no longer applies.  The comparison therefore
concerns the scalar field at fixed dimension and fixed context maximality,
not the possibility of simulating complex coordinates in a larger real
space.  It is also independent of KS noncolorability: the contradictions use
orthogonality, completeness of contexts, and field-dependent algebra, not an
impossibility of assigning two-valued states.

These fixed-dimension witnesses may also guide the design of protocols in
which a real implementation must either enlarge the Hilbert-space dimension
or relinquish maximality of the specified contexts.

Finally, faithfulness is essential.  If unprescribed orthogonalities or
ray coincidences were allowed, the covering identity could collapse to
a degenerate realization.  Conversely, deleting contexts weakens the
projector identity or the perimeter closure system.  The relevant
object is therefore the complete pair consisting of the vertex set and
its specified contexts.

\section{Conclusion}

The hypergraphs $\Htwenty$ and $\Htwentyone$ give two exact finite
separations between complex and real three-dimensional orthogonality
representations.  The first is governed by the field dependence of a
two-projector decomposition; the second survives at the orthogonal limit
and is governed by cyclic closure.  Explicit algebraic vectors,
complete nonexistence proofs, and exact pairwise-overlap certificates
establish both results without numerical or coloring assumptions.  Explicit
phase-adjusted coordinates in $\R^6$ show at the same time that the
obstruction disappears after dimension doubling, once the original
three-ray contexts are no longer required to be maximal.

\appendix

\section{Exact faithfulness certificates}
\label{app:faithfulness}

For two nonzero representatives $x,y$, define their squared normalized
overlap by
\begin{equation}
 Q(x,y)=\frac{|\langle x,y\rangle|^2}
 {\langle x,x\rangle\langle y,y\rangle}.
 \label{eq:Q}
\end{equation}
Thus $Q=0$ exactly for orthogonal rays, while $Q=1$ exactly for
coincident rays.  All calculations below are exact reductions in the
algebraic fields generated by the displayed radicals and phases.  The
reductions and pair counts were independently cross-checked by exact symbolic
arithmetic; no floating-point tolerance is used.

For $\Htwenty$, the 12 contexts prescribe 36 distinct unordered
orthogonal pairs.  Substitution of
Eqs.~\eqref{eq:C-definition}--\eqref{eq:h20-corner-formula}, followed by
reduction with Eq.~\eqref{eq:C-polynomial}, gives $Q=0$ for precisely
those 36 pairs.  For the other 154 pairs, the complete list of values
and multiplicities is:
\begin{equation}
\begin{array}{c@{\;}r@{\quad}c@{\;}r@{\quad}c@{\;}r}
 Q&\text{mult.}&Q&\text{mult.}&Q&\text{mult.}\\ \hline
 \frac1{25}&2&\frac3{10}&8&\frac8{15}&8\\
 \frac3{50}&4&\frac{23}{75}&2&\frac{14}{25}&8\\
 \frac7{75}&8&\frac{49}{150}&8&\frac{16}{25}&2\\
 \frac8{75}&8&\frac{26}{75}&8&\frac7{10}&8\\
 \frac7{50}&8&\frac{28}{75}&8&\frac{59}{75}&2\\
 \frac4{25}&8&\frac{32}{75}&4&\frac45&16\\
 \frac15&16&\frac7{15}&8&&\\[-2pt]
 \frac6{25}&8&\frac{13}{25}&2&&
\end{array}
\label{eq:H20-Q-table}
\end{equation}
The multiplicities sum to 154.  In particular,
\begin{equation}
 \frac1{25}\le Q(x,y)\le\frac45
 \label{eq:H20-Q-bounds}
\end{equation}
for every unprescribed pair.  This proves simultaneously that no such
pair is orthogonal and that no two vertices label the same ray.

For $\Htwentyone$, the 14 contexts prescribe 42 distinct unordered
orthogonal pairs.  Exact substitution of
Eqs.~\eqref{eq:pq-identities}--\eqref{eq:h21-corners} gives zero for
precisely those pairs.  The other 168 pairs split as
\begin{equation}
 \begin{array}{c|cc}
 Q&\frac14&\frac12\\ \hline
 \text{multiplicity}&84&84
 \end{array}.
 \label{eq:H21-Q-table}
\end{equation}
Again every unprescribed overlap lies strictly between zero and one.
This completes the exact faithfulness verification used in
Theorems~\ref{thm:h20-complex} and~\ref{thm:h21-complex}.

\begin{acknowledgments}
During manuscript preparation, the authors used OpenAI Codex (GPT-5) to assist with literature organization, LaTeX editing, exact symbolic checks, and figure/PDF verification; all AI-assisted output was directed and critically reviewed by the authors, who independently verified the mathematics and take full responsibility for the final content, and no AI tool is an author.
This research was funded in part by the Austrian
Science Fund (FWF), Grant DOI \href{https://doi.org/10.55776/PIN5424624}{10.55776/PIN5424624}, and the Czech Science
Foundation (GA\v{C}R), Grant No.~25-20013L.  The authors declare no
conflict of interest.
\end{acknowledgments}

\bibliography{svozil.bib}

\begin{thebibliography}{12}%
\makeatletter
\providecommand \@ifxundefined [1]{%
 \@ifx{#1\undefined}
}%
\providecommand \@ifnum [1]{%
 \ifnum #1\expandafter \@firstoftwo
 \else \expandafter \@secondoftwo
 \fi
}%
\providecommand \@ifx [1]{%
 \ifx #1\expandafter \@firstoftwo
 \else \expandafter \@secondoftwo
 \fi
}%
\providecommand \natexlab [1]{#1}%
\providecommand \enquote  [1]{``#1''}%
\providecommand \bibnamefont  [1]{#1}%
\providecommand \bibfnamefont [1]{#1}%
\providecommand \citenamefont [1]{#1}%
\providecommand \href@noop [0]{\@secondoftwo}%
\providecommand \href [0]{\begingroup \@sanitize@url \@href}%
\providecommand \@href[1]{\@@startlink{#1}\@@href}%
\providecommand \@@href[1]{\endgroup#1\@@endlink}%
\providecommand \@sanitize@url [0]{\catcode `\\12\catcode `\$12\catcode
  `\&12\catcode `\#12\catcode `\^12\catcode `\_12\catcode `\%12\relax}%
\providecommand \@@startlink[1]{}%
\providecommand \@@endlink[0]{}%
\providecommand \url  [0]{\begingroup\@sanitize@url \@url }%
\providecommand \@url [1]{\endgroup\@href {#1}{\urlprefix }}%
\providecommand \urlprefix  [0]{URL }%
\providecommand \Eprint [0]{\href }%
\providecommand \doibase [0]{https://doi.org/}%
\providecommand \selectlanguage [0]{\@gobble}%
\providecommand \bibinfo  [0]{\@secondoftwo}%
\providecommand \bibfield  [0]{\@secondoftwo}%
\providecommand \translation [1]{[#1]}%
\providecommand \BibitemOpen [0]{}%
\providecommand \bibitemStop [0]{}%
\providecommand \bibitemNoStop [0]{.\EOS\space}%
\providecommand \EOS [0]{\spacefactor3000\relax}%
\providecommand \BibitemShut  [1]{\csname bibitem#1\endcsname}%
\let\auto@bib@innerbib\@empty
\bibitem [{\citenamefont {Harding}\ and\ \citenamefont
  {Salinas~Schmeis}(2025)}]{harding2025remarksIJTP}%
  \BibitemOpen
  \bibfield  {author} {\bibinfo {author} {\bibfnamefont {J.}~\bibnamefont
  {Harding}}\ and\ \bibinfo {author} {\bibfnamefont {R.}~\bibnamefont
  {Salinas~Schmeis}},\ }\bibfield  {title} {\bibinfo {title} {Remarks on
  orthogonality spaces},\ }\href
  {https://doi.org/10.48550/10.1007/s10773-025-06062-x} {\bibfield  {journal}
  {\bibinfo  {journal} {International Journal of Theoretical Physics}\ }\textbf
  {\bibinfo {volume} {64}},\ \bibinfo {pages} {211} (\bibinfo {year} {2025})},\
  \Eprint {https://arxiv.org/abs/2505.13871} {arXiv:2505.13871 [math-ph]}
  \BibitemShut {NoStop}%
\bibitem [{\citenamefont {Trandafir}\ and\ \citenamefont
  {Cabello}(2025)}]{2024-cabello-trandafir0}%
  \BibitemOpen
  \bibfield  {author} {\bibinfo {author} {\bibfnamefont {S.}~\bibnamefont
  {Trandafir}}\ and\ \bibinfo {author} {\bibfnamefont {A.}~\bibnamefont
  {Cabello}},\ }\href {https://doi.org/10.1103/PhysRevA.111.022408} {\bibinfo
  {title} {Optimal conversion of {K}ochen-{S}pecker sets into bipartite perfect
  quantum strategies}} (\bibinfo {year} {2025}),\ \Eprint
  {https://arxiv.org/abs/arXiv:2410.17470} {arXiv:2410.17470} \BibitemShut
  {NoStop}%
\bibitem [{\citenamefont {Navara}\ and\ \citenamefont
  {Svozil}(2025)}]{svozil-2025-MYOCHS}%
  \BibitemOpen
  \bibfield  {author} {\bibinfo {author} {\bibfnamefont {M.}~\bibnamefont
  {Navara}}\ and\ \bibinfo {author} {\bibfnamefont {K.}~\bibnamefont
  {Svozil}},\ }\href {https://doi.org/10.48550/arXiv.2509.08636} {\bibinfo
  {title} {Construction of {K}ochen-{S}pecker sets from mutually unbiased
  bases}} (\bibinfo {year} {2025}),\ \Eprint {https://arxiv.org/abs/2509.08636}
  {arXiv:2509.08636 [quant-ph]} \BibitemShut {NoStop}%
\bibitem [{\citenamefont {Hardy}\ and\ \citenamefont
  {Wootters}(2012)}]{Hardy2012LimitedHolism}%
  \BibitemOpen
  \bibfield  {author} {\bibinfo {author} {\bibfnamefont {L.}~\bibnamefont
  {Hardy}}\ and\ \bibinfo {author} {\bibfnamefont {W.~K.}\ \bibnamefont
  {Wootters}},\ }\bibfield  {title} {\bibinfo {title} {Limited holism and
  real-vector-space quantum theory},\ }\href
  {https://doi.org/10.1007/s10701-011-9616-6} {\bibfield  {journal} {\bibinfo
  {journal} {Foundations of Physics}\ }\textbf {\bibinfo {volume} {42}},\
  \bibinfo {pages} {454} (\bibinfo {year} {2012})}\BibitemShut {NoStop}%
\bibitem [{\citenamefont {Aleksandrova}\ \emph {et~al.}(2013)\citenamefont
  {Aleksandrova}, \citenamefont {Borish},\ and\ \citenamefont
  {Wootters}}]{Aleksandrova2013UniversalQubit}%
  \BibitemOpen
  \bibfield  {author} {\bibinfo {author} {\bibfnamefont {A.}~\bibnamefont
  {Aleksandrova}}, \bibinfo {author} {\bibfnamefont {V.}~\bibnamefont
  {Borish}},\ and\ \bibinfo {author} {\bibfnamefont {W.~K.}\ \bibnamefont
  {Wootters}},\ }\bibfield  {title} {\bibinfo {title} {Real-vector-space
  quantum theory with a universal quantum bit},\ }\href
  {https://doi.org/10.1103/PhysRevA.87.052106} {\bibfield  {journal} {\bibinfo
  {journal} {Physical Review A}\ }\textbf {\bibinfo {volume} {87}},\ \bibinfo
  {pages} {052106} (\bibinfo {year} {2013})}\BibitemShut {NoStop}%
\bibitem [{\citenamefont {McKague}\ \emph {et~al.}(2009)\citenamefont
  {McKague}, \citenamefont {Mosca},\ and\ \citenamefont
  {Gisin}}]{McKague2009SimulatingRealHilbert}%
  \BibitemOpen
  \bibfield  {author} {\bibinfo {author} {\bibfnamefont {M.}~\bibnamefont
  {McKague}}, \bibinfo {author} {\bibfnamefont {M.}~\bibnamefont {Mosca}},\
  and\ \bibinfo {author} {\bibfnamefont {N.}~\bibnamefont {Gisin}},\ }\bibfield
   {title} {\bibinfo {title} {Simulating quantum systems using real hilbert
  spaces},\ }\href {https://doi.org/10.1103/PhysRevLett.102.020505} {\bibfield
  {journal} {\bibinfo  {journal} {Physical Review Letters}\ }\textbf {\bibinfo
  {volume} {102}},\ \bibinfo {pages} {020505} (\bibinfo {year}
  {2009})}\BibitemShut {NoStop}%
\bibitem [{\citenamefont {Renou}\ \emph {et~al.}(2021)\citenamefont {Renou},
  \citenamefont {Trillo}, \citenamefont {Weilenmann}, \citenamefont {Le},
  \citenamefont {Tavakoli}, \citenamefont {Gisin}, \citenamefont {Ac\'in},\
  and\ \citenamefont {Navascu\'es}}]{renou-2021}%
  \BibitemOpen
  \bibfield  {author} {\bibinfo {author} {\bibfnamefont {M.-O.}\ \bibnamefont
  {Renou}}, \bibinfo {author} {\bibfnamefont {D.}~\bibnamefont {Trillo}},
  \bibinfo {author} {\bibfnamefont {M.}~\bibnamefont {Weilenmann}}, \bibinfo
  {author} {\bibfnamefont {T.~P.}\ \bibnamefont {Le}}, \bibinfo {author}
  {\bibfnamefont {A.}~\bibnamefont {Tavakoli}}, \bibinfo {author}
  {\bibfnamefont {N.}~\bibnamefont {Gisin}}, \bibinfo {author} {\bibfnamefont
  {A.}~\bibnamefont {Ac\'in}},\ and\ \bibinfo {author} {\bibfnamefont
  {M.}~\bibnamefont {Navascu\'es}},\ }\bibfield  {title} {\bibinfo {title}
  {Quantum theory based on real numbers can be experimentally falsified},\
  }\href {https://doi.org/10.1038/s41586-021-04160-4} {\bibfield  {journal}
  {\bibinfo  {journal} {Nature}\ }\textbf {\bibinfo {volume} {600}},\ \bibinfo
  {pages} {625} (\bibinfo {year} {2021})},\ \Eprint
  {https://arxiv.org/abs/arXiv:2101.10873} {arXiv:2101.10873} \BibitemShut
  {NoStop}%
\bibitem [{\citenamefont {Weilenmann}\ \emph {et~al.}(2025)\citenamefont
  {Weilenmann}, \citenamefont {Gisin},\ and\ \citenamefont
  {Sekatski}}]{Weilenmann2025PartialIndependencePRL}%
  \BibitemOpen
  \bibfield  {author} {\bibinfo {author} {\bibfnamefont {M.}~\bibnamefont
  {Weilenmann}}, \bibinfo {author} {\bibfnamefont {N.}~\bibnamefont {Gisin}},\
  and\ \bibinfo {author} {\bibfnamefont {P.}~\bibnamefont {Sekatski}},\
  }\bibfield  {title} {\bibinfo {title} {Partial independence suffices to rule
  out real quantum theory experimentally},\ }\href
  {https://doi.org/10.1103/3fv7-p8cs} {\bibfield  {journal} {\bibinfo
  {journal} {Physical Review Letters}\ }\textbf {\bibinfo {volume} {135}},\
  \bibinfo {pages} {180201} (\bibinfo {year} {2025})}\BibitemShut {NoStop}%
\bibitem [{\citenamefont {Bednorz}\ and\ \citenamefont
  {Batle}(2022)}]{Bednorz2022OptimalDiscrimination}%
  \BibitemOpen
  \bibfield  {author} {\bibinfo {author} {\bibfnamefont {A.}~\bibnamefont
  {Bednorz}}\ and\ \bibinfo {author} {\bibfnamefont {J.}~\bibnamefont
  {Batle}},\ }\bibfield  {title} {\bibinfo {title} {Optimal discrimination
  between real and complex quantum theories},\ }\href
  {https://doi.org/10.1103/PhysRevA.106.042207} {\bibfield  {journal} {\bibinfo
   {journal} {Physical Review A}\ }\textbf {\bibinfo {volume} {106}},\ \bibinfo
  {pages} {042207} (\bibinfo {year} {2022})}\BibitemShut {NoStop}%
\bibitem [{\citenamefont {Khrennikov}\ and\ \citenamefont
  {Svozil}(2026)}]{svozil-2025-rvc}%
  \BibitemOpen
  \bibfield  {author} {\bibinfo {author} {\bibfnamefont {A.}~\bibnamefont
  {Khrennikov}}\ and\ \bibinfo {author} {\bibfnamefont {K.}~\bibnamefont
  {Svozil}},\ }\bibfield  {title} {\bibinfo {title} {Faithful real embedding of
  a three-dimensional complex kochen-specker configuration},\ }\href
  {https://doi.org/10.1103/2pf4-bq7p} {\bibfield  {journal} {\bibinfo
  {journal} {Physical Review A}\ }\textbf {\bibinfo {volume} {113}},\ \bibinfo
  {pages} {032206} (\bibinfo {year} {2026})},\ \Eprint
  {https://arxiv.org/abs/arXiv:2511.17223} {arXiv:2511.17223} \BibitemShut
  {NoStop}%
\bibitem [{\citenamefont {Lov\'asz}(1979)}]{lovasz-79}%
  \BibitemOpen
  \bibfield  {author} {\bibinfo {author} {\bibfnamefont {L.}~\bibnamefont
  {Lov\'asz}},\ }\bibfield  {title} {\bibinfo {title} {On the {S}hannon
  capacity of a graph},\ }\href {https://doi.org/10.1109/TIT.1979.1055985}
  {\bibfield  {journal} {\bibinfo  {journal} {IEEE Transactions on Information
  Theory}\ }\textbf {\bibinfo {volume} {25}},\ \bibinfo {pages} {1} (\bibinfo
  {year} {1979})}\BibitemShut {NoStop}%
\bibitem [{\citenamefont {Sol\'is-Encina}\ and\ \citenamefont
  {Portillo}(2015)}]{Portillo-2015}%
  \BibitemOpen
  \bibfield  {author} {\bibinfo {author} {\bibfnamefont {A.}~\bibnamefont
  {Sol\'is-Encina}}\ and\ \bibinfo {author} {\bibfnamefont {J.~R.}\
  \bibnamefont {Portillo}},\ }\href {https://doi.org/10.48550/arXiv.1504.03662}
  {\bibinfo {title} {Orthogonal representation of graphs}} (\bibinfo {year}
  {2015}),\ \Eprint {https://arxiv.org/abs/arXiv:1504.03662} {arXiv:1504.03662}
  \BibitemShut {NoStop}%
\end{thebibliography}%

\end{document}